\newtheorem{thm}{Theorem}
\newtheorem{lem}{Lemma}
\newtheorem{defn}{Definition}[section]
\newtheorem{rem}{Remark}
\numberwithin{equation}{section} \numberwithin{lem}{section}
\numberwithin{thm}{section} \numberwithin{prop}{section}
\numberwithin{cor}{section} \numberwithin{rem}{section}
\title[Optimal regularity]{Optimal regularity of subsonic steady-states solution of Euler-Poisson equations for semiconductors with sonic boundary}
\author[S. Li, M. Mei, K. Zhang and G. Zhang]{Siying Li$^{\rm 1}$, Ming Mei$^{\rm 2, 3}$, Kaijun Zhang$^{\rm 1}$ and Guojing Zhang$^{\rm 1, *}$}
\thanks{*Corresponding author.}
\thanks{E-mail addresses: lisiying630@163.com (S. Li), ming.mei@mcgill.ca (M. Mei), zhangkj201@nenu.edu.cn (K. Zhang), zhanggj100@nenu.edu.cn (G. Zhang).}
\begin{document}
  \maketitle
  \begin{center}
	{\footnotesize
		$^{\rm 1}$ School of Mathematics and Statistics, Northeast Normal University,\\
		Changchun, 130024, P.R.China.\\
		\smallskip
		$^{\rm 2}$ Department of Mathematics, Champlain College Saint-Lambert,\\
		Saint-Lambert, Quebec, J4P 3P2, Canada.\\
		$^{\rm 3}$ Department of Mathematics and Statistics, McGill University,\\
		Montreal, Quebec, H3A 2K6, Canada.}
  \end{center}
  \maketitle
  \date{}
	
\begin{abstract}
In this paper, we study the optimal regularity of the stationary sonic-subsonic solution to the unipolar isothermal hydrodynamic model of semiconductors with sonic boundary. Applying the comparison principle and the energy estimate, we obtain the
regularity of the sonic-subsonic solution as $C^{\frac{1}{2}}[0,1]\cap W^{1,p}(0,1)$ for any $p<2$, which is then proved to be optimal by analyzing the property of solution around the singular point on the sonic line, i.e., $\rho\notin C^\nu[0,1]$ for any $\nu>\frac{1}{2}$, and $\rho\notin W^{1,\kappa}(0,1)$ for any $\kappa\ge 2$. Furthermore, we explore the influence of the semiconductors effect on the singularity of solution at sonic points $x=1$ and $x=0$, that is, the solution always has strong
singularity at sonic point $x=1$ for any relaxation time $\tau>0$, but, once the relaxation time is sufficiently large $\tau\gg 1$, then the sonic-subsonic steady-states possess the strong
singularity at both sonic boundaries
$x=0$ and $x=1$. We also show that the pure subsonic
solution $\rho$ belongs to $W^{2,\infty}(0,1)$, which can be embedded into $C^{1,1}[0,1]$, and it is much better than the regularity of sonic-subsonic solutions.
\end{abstract}

{\small {\bf Keywords:} hydrodynamic model, Euler-Poisson equations, steady-states, sonic-subsonic solution, optimal regularity.}

\section{Introduction}

This paper is a continuity of the series of previous studies \cite{LMZZ2017, LMZZ2018} on subsonic steady-states for the Euler-Poisson equations with sonic boundary. For the charged fluid particles such as electrons and holes in semiconductor devices, the presented system is the 1-D hydrodynamic model of semiconductors, the so-called Euler-Poisson equations \cite{B1973}:
\begin{equation}
	\left\{
	\begin{aligned}\label{eq1}
		&\rho _t+(\rho u)_x=0,\\
		&(\rho u)_t+\left(\rho u^2+P(\rho)\right)_x=\rho E-\frac{\rho u}{\tau},\\
		&E_x=\rho -b(x),
	\end{aligned}
	\right.
\end{equation}
where $(x,t)\in \Omega\times(0,\infty)$ with $\Omega\subset \mathbb{R}$ is the bounded domain. Without loss of generality, we take $\Omega=[0,1]$. The unknowns $\rho$ and $u$ represent the electron density and the electron velocity, respectively. The function $E$ represents the electric field, which is generated by the Coulomb force of particles. The pressure function $P(\rho)$ denotes the pressure-density relation. When the system is isothermal, $P(\rho)$ is physically represented by
\begin{align}\label{eq2}
P(\rho)=T\rho,\,\,\,\,\,\mbox{with the constant temperature $T>0$}.	
\end{align}
The constant $\tau>0$ denotes the relaxation time. The given function $b(x)>0$ is the doping profile standing for the density of impurities in semiconductor materials. For more details we refer to \cite{J2001, MRS1989}. Throughout the paper, we assume that the doping profile $b(x)\in L^\infty(0,1)$ and denote
\begin{align}\label{eq3}
	\underline{b}:=\underset{x\in(0,1)}{{\rm essinf}}\,\, b(x)\,\,\,\,\,\mbox{and}\,\,\,\,\,\bar{b}:=\underset{x\in(0,1)}{{\rm esssup}}\,\, b(x).	
\end{align}

In this paper, we consider the steady-state equations to \eqref{eq1} in the bounded domain $[0,1]$. Let $J=\rho u$ be the current density of the electrons. Then we have the stationary equations of \eqref{eq1} as follows  
\begin{equation}
	\left\{
	\begin{aligned}\label{eq4}
		&J\equiv constant,\\
		&\left(\frac{J^2}{\rho}+P(\rho)\right)_x=\rho E-\frac{J}{\tau},\,\,\,\,\,\,\, x\in(0,1),\\
		&E_x=\rho-b(x).
	\end{aligned}
	\right.
\end{equation}

According to the terminology from gas dynamics, we call $c:=\sqrt{P^{'}(\rho)}=\sqrt{T}>0$ the sound speed by \eqref{eq2}. Thus, the corresponding electron velocity $u$ of the system \eqref{eq4} is
said to be subsonic / or sonic / or supersonic, if 
\begin{align}\label{eq5}
\mbox{fluid velocity}:\,\, u < / \mbox{or} = / \mbox{or} > c:\,\,\mbox{sound speed}.
\end{align}
Noting that if $(\rho(x),E(x))$ is a pair of solution to equation \eqref{eq4} for a given constant $J$, then $(\rho(1-x),-E(1-x))$ is a solution to equation \eqref{eq4} with respect to $-J$ and $b(1-x)$. So we consider the case of $J>0$. Without loss of generality, we take 
\begin{align}\label{y1}
T=1\,\,\,\,\, \mbox{and} \,\,\,\,\, J=1.
\end{align} 
Therefore, \eqref{eq4} is equivalently reduced to 
\begin{equation}
	\left\{
	\begin{aligned}\label{eq6}
		&\left(\frac{1}{\rho}-\frac{1}{\rho^3}\right)\rho_x=E-\frac{1}{\tau \rho},\,\,\,\,\,\,\, x\in(0,1),\\
		&E_x=\rho-b(x).
	\end{aligned}
	\right.
\end{equation}
In what follows from \eqref{eq5}-\eqref{y1}, it can be identified that $\rho>1$ represents for the subsonic flow, $\rho=1$ for the sonic flow, and $0<\rho<1$ for the supersonic flow. Now we impose \eqref{eq6} with the sonic boundary condition
\begin{align}\label{eq7}
\rho(0)=\rho(1)=1.
\end{align}

Differentiating the equation $\eqref{eq6}_1$ with respect to $x$ and according to the equation $\eqref{eq6}_2$, we derive
\begin{equation}
	\left\{
	\begin{aligned}\label{eq8}
		&\left(\left(\frac{1}{\rho}-\frac{1}{\rho^3}\right)\rho_x\right)_x=\rho-b-\left(\frac{1}{\tau \rho}\right)_x,\\
		&\rho(0)=\rho(1)=1.
	\end{aligned}
	\right.
\end{equation}
When $\rho(x)>1$ for $x\in(0,1)$, the above model \eqref{eq8} is elliptic but denegerate at the boundary, the solution of \eqref{eq8} generally loses certain regularity. So we introduce the
following definition of weak solution by \cite{LMZZ2017}.
\begin{defn}\label{def1}
$\rho(x)$ is called an interior subsonic solution, if $\rho(x)\ge 1$ with $\rho(0)=\rho(1)=1$ to equation \eqref{eq8}, and $\left(\rho(x)-1\right)^2\in H_0^1(0,1)$, and satisfies that, for any $\phi\in H_0^1(0,1)$  
\begin{align*}
\int_{0}^{1}\left[\left(\frac{1}{\rho}-\frac{1}{\rho^3}\right)\rho_x+\frac{1}{\tau \rho}\right]\cdot\phi _xdx+\int_{0}^{1}(\rho-b)\cdot\phi dx=0,		
\end{align*}
namely,
\begin{align}\label{eq9}
\int_{0}^{1}\left[\frac{\rho+1}{2\rho^3}[(\rho-1)^2]_x+\frac{1}{\tau \rho}\right]\cdot \phi_xdx+\int_{0}^{1}\left(\rho-b\right)\cdot\phi dx=0. 	
\end{align}   
\end{defn}

Once $\rho$ is determined from equation \eqref{eq8}, we can further solve the electric field $E(x)$ by 
\begin{align*}
E(x)=\left(\frac{1}{\rho}-\frac{1}{\rho^3}\right)\rho_x+\frac{1}{\tau \rho}=\frac{(\rho+1)[(\rho-1)^2]_x}{2\rho^3}+\frac{1}{\tau \rho}.	
\end{align*}	
Hence, finding the solution of equation \eqref{eq4} with the condition \eqref{eq7} amounts to solving equation \eqref{eq8}.	

The hydrodynamic model for semiconductors, introduced by Bløtekjær \cite{B1973}, has been intensively studied in mathematical physics. In 1990, Degond and Markowich \cite{DM1990} first proved the existence of subsonic solution for one-dimensional case, and showed the uniqueness of solution with small electric current. After that, the steady-state subsonic flows were investigated in various physical boundary conditions and dimensions in \cite{BDC2014, DM1993, GS2005, HMWY2011, LMM2002, NS2007}. For the supersonic steady-state flows, Peng and Violet \cite{PV2006} established the existence and uniqueness of supersonic solution with the semiconductor effect for one-dimensional model. Bae et al \cite{Bae} extended this work to two-dimensional case for pure Euler-Poisson equation. The transonic flows has also been extensively studied in \cite{AMPS1991, DZ2020, G1992, GM1996, LRXX2011, LX2012, R2005}. 

When the boundary  is subjected to be sonic, Li-Mei-Zhang-Zhang \cite{LMZZ2017, LMZZ2018} first investigated in great depth the structure of all types for the doping profile is subsonic and supersonic, respectively. The sonic boundary condition means the system has degeneracy effect, which will make the system has strong singularity. Subsequently, Chen-Mei-Zhang-Zhang \cite{CMZZ2020} further studied the case of transonic doping profile, see also \cite{CMZZ2021, CMZZ2022, CMZZ2023} for the radial
or the spiral radial subsonic, supersonic and transonic solutions in two and three dimensional spaces. Recently, Feng-Hu-Mei \cite{FHM2022} showed the structure stability of different types of solutions. Asymptotic limits of sonic-subsonic solution was studied in \cite{CLMZ2023}. Mu-Mei-Zhang \cite{MMZ2020} proved the well-posedness and ill-posedness of stationary subsonic and supersonic solutions for the bipolar model. However, the literatures mentioned above mostly study the existence and uniqueness of weak solution and other related problems, but there are no more detailed discussions on the regularity of solution for this type of models. The optimal regularity of solution for the stationary semiconductor model with sonic boundary is still
unclear. Thus, exploring this question will be the main goal of this paper.

In this paper, we are devoted to studying the optimal regularity and the corresponding singularity of the sonic-subsonic solution $\rho$ to equation \eqref{eq8} and the corresponding solution $w$ to the following equation transformed from $\rho$  
\begin{equation}
	\left\{
	\begin{aligned}\label{eq10}
		&w_{xx}=\rho-b-\left(\frac{1}{\tau \rho}\right)_x,\,\,\,\,\,\,\, x\in(0,1),\\
		&w(0)=w(1)=\frac{1}{2},
	\end{aligned}
	\right.
\end{equation}
where $w=w(x)={\rm ln}\rho+\frac{1}{2\rho^2}=:F(\rho)$.

Before giving the main results, let us recall the existence and uniqueness of interior subsonic solution, which is excerpted from the first part of Theorem 1.3 in \cite{LMZZ2017}.
\begin{lem}\label{le4}(Existence \cite{LMZZ2017}).
Assume that the doping profile $b(x)\in L^\infty(0,1)$ is subsonic such that $\underline{b}>1$. Then the boundary value problem \eqref{eq6}-\eqref{eq7} admits a unique interior subsonic solution $(\rho,E)\in C^\frac{1}{2}[0,1]\times H^1(0,1)$ satisfying the boundedness
\begin{align}\label{y2}
1+m {\rm sin}\pi x\le \rho(x)\le \bar{b},\,\,\,\,\,\,\, x\in[0,1],
\end{align}
and particularly,
\begin{equation}\label{y3}
\left\{\begin{array}{l}
B_1(1-x)^{\frac{1}{2}} \leq \rho(x)-1 \leq B_2(1-x)^{\frac{1}{2}},\\
-B_3(1-x)^{-\frac{1}{2}} \leq \rho^{\prime}(x) \leq-B_4(1-x)^{-\frac{1}{2}},
\end{array} \quad \text { for } x \text { near } 1,\right.
\end{equation}
where $m=m(\tau,\underline{b})<\bar{b}-1$ is a small positive constant, and $B_2>B_1>0$ and $B_3>B_4>0$ are certain constants. 	
\end{lem}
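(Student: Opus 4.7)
The plan is to pass from the degenerate sonic problem \eqref{eq8} to the non-degenerate $w$-formulation of \eqref{eq10}, obtain existence through an approximation scheme with shifted boundary data, and sharpen the sonic boundary behavior \eqref{y3} by constructing explicit barriers of the form $1+B(1-x)^{1/2}$.

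First, I would approximate by considering, for small $\varepsilon>0$, the non-degenerate problem with shifted boundary data $\rho_\varepsilon(0)=\rho_\varepsilon(1)=1+\varepsilon$. In the variable $w=F(\rho)=\ln\rho+\tfrac{1}{2\rho^{2}}$, the coefficient $F'(\rho)=\tfrac{1}{\rho}-\tfrac{1}{\rho^{3}}$ is bounded below by a positive constant on $\{\rho\geq 1+\varepsilon\}$, so the equation becomes uniformly elliptic in $w$ and a classical solution $\rho_\varepsilon\in C^{2}[0,1]$ can be produced by a Schauder fixed-point argument after truncating iterates between the constants $1+\varepsilon$ and $\bar b$.

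Next, I would extract $\varepsilon$-independent a priori bounds. The upper bound $\rho_\varepsilon\leq\bar b$ comes from the maximum principle applied directly to \eqref{eq8}: at an interior maximum $x_{0}$, the condition $\rho_{\varepsilon,x}(x_{0})=0$ annihilates the transport term $\bigl(\tfrac{1}{\tau\rho}\bigr)_x$ on the right while the degenerate left-hand side is nonpositive, forcing $\rho_\varepsilon(x_0)\leq b(x_0)\leq\bar b$. For the lower bound I would substitute $\psi=1+m\sin\pi x$ into \eqref{eq8}: the left-hand side is $O(m^{2})$ while the right-hand side is bounded above by $1-\underline b+m(1+\pi/\tau)$, strictly negative once $m=m(\tau,\underline b)$ is sufficiently small in view of $\underline b>1$; hence $\psi$ is a subsolution and the comparison principle yields $\rho_\varepsilon\geq\psi$. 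Testing the weak form \eqref{eq9} against $(\rho_\varepsilon-1)^{2}$ provides $H^{1}_{0}$-control of $(\rho_\varepsilon-1)^{2}$ uniform in $\varepsilon$, and a compactness argument yields, as $\varepsilon\downarrow 0$, an interior subsonic solution in the sense of Definition \ref{def1}; the electric field $E$ belongs to $H^{1}(0,1)$ via $E_x=\rho-b\in L^{\infty}$.

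The sharp sonic estimates \eqref{y3}, which I expect to be the main technical obstacle, should be derived by a careful barrier construction at $x=1$. A formal leading-order analysis of \eqref{eq8} near $x=1$, where $\rho\to 1$ and the right-hand side approaches a strictly negative value thanks to $\underline b>1$, reduces the equation to $\bigl((\rho-1)\rho_x\bigr)_x\approx \rho-b$, whose integration forces the profile $\rho-1\sim C(1-x)^{1/2}$. I would make this rigorous by plugging $1+B(1-x)^{1/2}$ into \eqref{eq8} on a short interval $[1-\delta,1]$ and choosing constants $B_{1}<B_{2}$ so that the smaller gives a subsolution and the larger a supersolution; the weak maximum principle then sandwiches $\rho$ between them, and differentiating produces the matching two-sided bounds on $\rho'$. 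Combining with the mirrored sonic layer near $x=0$ yields $\rho\in C^{1/2}[0,1]$. Finally, uniqueness follows by subtracting the $w$-equations of two subsonic solutions, using the monotonicity of $F^{-1}$ to write the difference as a linear elliptic equation for $w_{1}-w_{2}$ with zero boundary data, and applying a weighted energy estimate (with weight dictated by the sonic degeneracy near both endpoints) to conclude that the difference vanishes.
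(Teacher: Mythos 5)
This lemma is not proved in the paper at all: it is quoted verbatim from Theorem~1.3 of \cite{LMZZ2017}, so the only comparison available is with that reference. Your skeleton for existence and for \eqref{y2} --- regularize with boundary data $1+\varepsilon$, pass to the uniformly elliptic $w$-formulation, get the upper barrier $\bar b$ and the lower barrier $1+m\sin\pi x$ (your $O(m^2)$ versus $\underline b-1-m(1+\pi/\tau)$ computation is the right one), control $(\rho_\varepsilon-1)^2$ in $H^1_0$, and pass to the limit --- is essentially the argument of \cite{LMZZ2017}. But there are two genuine problems in the part you yourself identify as the main obstacle, namely \eqref{y3}.

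First, the claim of a ``mirrored sonic layer near $x=0$'' is false, and it is precisely the point of the third bullet of Theorem \ref{th1} and of Lemmas \ref{le1}--\ref{le2} that the two endpoints are \emph{not} symmetric: the relaxation term $\tfrac{1}{\tau\rho}$ (equivalently, the sign of the current $J>0$) breaks the symmetry $x\mapsto 1-x$, and by \cite[Theorem 2.4]{LMZZ2017} the lower bound $\rho-1\gtrsim x^{1/2}$ fails near $x=0$ when $\tau\ll 1$. A two-sided square-root layer at $x=0$ would contradict the paper's own results; fortunately only the upper bound is needed there for $C^{1/2}[0,1]$. Second, the barrier $1+B(1-x)^{1/2}$ is more treacherous than you suggest. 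The principal part annihilates the pure power at top order (since $((\rho-1)^2)_{xx}=(B^2(1-x))_{xx}=0$), so the sub/supersolution inequality is decided by the next-order terms; and the term $\bigl(\tfrac{1}{\tau\rho}\bigr)_x=-\tfrac{\rho_x}{\tau\rho^2}$ is itself of order $(1-x)^{-1/2}$ on this ansatz, i.e.\ it is \emph{not} a lower-order perturbation of the reduced balance $((\rho-1)\rho_x)_x\approx\rho-b$ that you write down. Moreover, invoking ``the weak maximum principle'' to sandwich $\rho$ between such barriers requires a comparison principle for a quasilinear operator that degenerates exactly on the comparison set; the present paper needs the whole apparatus of \eqref{eq41}--\eqref{eq53} (the test function $\tfrac{V^+}{V^++h}$, the exceptional set $T^c$) to salvage comparison in a much milder situation. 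The robust route to \eqref{y3}, and the one this paper itself uses in Section 3, is through $w=F(\rho)$: one shows $w_x=E-\tfrac{1}{\tau\rho}$ is continuous up to $x=1$ with $w_x(1)<0$ (Lemma \ref{le1}), whence $w-\tfrac12\asymp(1-x)$ and, since $w-\tfrac12\asymp(\rho-1)^2$ by the Taylor expansion of $F$ at $\rho=1$, the bounds \eqref{y3} follow with no barrier construction at all.
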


The main results of this paper are given below.
\begin{thm}\label{th1}
The sonic-subsonic solution $\rho$ obtained in Lemma \ref{le4} admits:

\textbullet \, For $1\le p<2$, the solution $\rho$ to \eqref{eq8} and the solution $w$ to \eqref{eq10} satisfy the following 

\,\,\,\,\,\,\,H\"{o}lder regularity
\begin{equation}
	\left\{
	\begin{aligned}\label{eq11}
		&\rho\in C^{\frac{1}{2}}[0,1],\\
		&w\in C^{1,\frac{1}{2}}[0,1],
	\end{aligned}
	\right.
\end{equation}
\,\,\,\,\,\,\,\,\,\,\,\,\,\,and the following Sobolev regularity
\begin{equation}
	\left\{
	\begin{aligned}\label{eq12}
		&\rho\in W^{1,p}(0,1),\\
		&w\in W^{2,p}(0,1).
	\end{aligned}
	\right.
\end{equation}

\textbullet \, The above regularities are optimal, i.e., $\rho\notin C^\nu[0,1]$ for any $\nu>\frac{1}{2}$, and $\rho\notin W^{1,\kappa}(0,1)$ 

\,\,\,\,\,\,\,\,for any $\kappa\ge 2$.  

\textbullet \, For any relaxation time $\tau$, the solution $\rho$ has the singularity like \eqref{y3} near the point

\,\,\,\,\,\,\,\,$x=1$. The similar singularity exists near the point $x=0$ only for the ralaxation time 

\,\,\,\,\,\,\,\,$\tau\gg 1$.
\end{thm}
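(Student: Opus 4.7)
My approach combines three ingredients: (i) the substitution $w=F(\rho):=\ln\rho+\frac{1}{2\rho^2}$ that converts \eqref{eq8} into the semilinear form \eqref{eq10}; (ii) the sharp two-sided asymptotics \eqref{y3} at $x=1$ already provided by Lemma \ref{le4}; and (iii) a barrier-and-comparison argument at $x=0$ for the large-relaxation regime $\tau\gg 1$.

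For the H\"older part I would first observe that, from \eqref{eq6}$_1$, $w_x=F'(\rho)\rho_x=E-\frac{1}{\tau\rho}$, which is globally bounded since $E\in H^1(0,1)\hookrightarrow L^\infty$ by Lemma \ref{le4} and $\rho\ge 1$; hence $w$ is Lipschitz. Because $F'(1)=0$, $F''(1)=2$, and $F'(s)=(s^2-1)/s^3\ge 2(s-1)/\bar b^{\,3}$ on $[1,\bar b]$, direct integration yields the convexity-type bound
\begin{align*}
F(\rho)-F(\sigma)\ge \bar b^{\,-3}(\rho-\sigma)^2\qquad\text{for all }\bar b\ge\rho\ge\sigma\ge 1,
\end{align*}
so $|\rho(x_1)-\rho(x_2)|^2\le C|w(x_1)-w(x_2)|\le C|x_1-x_2|$ gives $\rho\in C^{1/2}[0,1]$. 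For $w\in C^{1,1/2}\cap W^{2,p}$ and $\rho\in W^{1,p}$ with $p<2$, I would rewrite \eqref{eq10} as $w_{xx}=\rho-b+\frac{\rho_x}{\tau\rho^2}$ and use the pointwise estimate $|\rho_x|\le C[(1-x)^{-1/2}+x^{-1/2}]$: the $(1-x)^{-1/2}$ part is supplied by \eqref{y3}, the $x^{-1/2}$ part by Step (iii) below (in the regime $\tau\gg 1$) or is trivially replaced by a bounded quantity when $\tau$ is small, while on any compact subinterval of $(0,1)$ standard interior elliptic regularity makes $\rho_x$ continuous. The $L^p$-integrability of these weights for $p<2$ then gives $w_{xx}\in L^p$ and, via $w_x(x)-w_x(y)=\int_y^x w_{xx}$, a modulus of continuity of order $|x-y|^{1/2}$; the $W^{1,p}$-bound on $\rho$ is immediate from the same pointwise estimate.

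The optimality is an immediate consequence of the lower halves of \eqref{y3}: the inequality $\rho(x)-1\ge B_1(1-x)^{1/2}$ prevents any H\"older exponent $\nu>\frac12$, and $|\rho'(x)|\ge B_4(1-x)^{-1/2}$ makes $\int_0^1|\rho'|^\kappa\,dx\ge B_4^\kappa\int_0^1(1-x)^{-\kappa/2}\,dx=\infty$ for every $\kappa\ge 2$.

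The main obstacle lies in Step (iii), the $x=0$ singularity for $\tau\gg 1$. Substituting the ansatz $\rho=1+\alpha x^{1/2}$ into \eqref{eq6}$_1$ and matching leading orders gives the boundary-layer identity $\alpha^2=E(0)-\tau^{-1}$, so a square-root singularity is expected precisely when $\tau E(0)>1$. To make this rigorous I would (a) combine $E(0)=E(1)+\int_0^1(b-\rho)\,dx$ with the analogous $x=1$ identity $\tau^{-1}-E(1)=\beta^2>0$ (read off from \eqref{y3}) and the uniform positivity $\int_0^1(b-\rho)\,dx\ge c_0>0$ (valid since $\rho\to 1$ at both endpoints while $b\ge\underline b>1$) to deduce $E(0)>\tau^{-1}$ once $\tau$ exceeds an explicit threshold; then (b) construct explicit sub- and super-solutions of the form $1+\alpha_\pm x^{1/2}$ on a right-neighbourhood of $0$, with $\alpha_\pm^2$ bracketing $E(0)-\tau^{-1}$, glue them to the global barriers of Lemma \ref{le4} in the interior, and invoke the comparison principle for the weak formulation \eqref{eq9} (exactly as in \cite{LMZZ2017} at $x=1$) to produce $B_1' x^{1/2}\le\rho(x)-1\le B_2' x^{1/2}$ and $B_4' x^{-1/2}\le\rho'(x)\le B_3' x^{-1/2}$ near $x=0$. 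The delicate point is the non-symmetric friction term $-\frac{1}{\tau\rho}$, which must be absorbed quantitatively into the barrier choice; it is precisely the smallness of $\tau^{-1}$ that allows this absorption and thus restores the symmetric boundary behaviour of the frictionless limit.
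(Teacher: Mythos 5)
Your reduction $w=F(\rho)$, the convexity inequality $F(\rho)-F(\sigma)\ge \bar b^{\,-3}(\rho-\sigma)^2$, and the resulting proof of $\rho\in C^{1/2}[0,1]$ are correct and close in spirit to the paper's \eqref{eq17}; the optimality bullet also goes through as you state, since the lower halves of \eqref{y3} are already quoted in Lemma \ref{le4} (the paper re-derives them from $w_x(1)<0$, Lemma \ref{le1}, to be self-contained). The first genuine gap is in your route to $w\in C^{1,1/2}$ and to the $W^{1,p}/W^{2,p}$ bounds: everything rests on the pointwise estimate $|\rho_x|\le C[x^{-1/2}+(1-x)^{-1/2}]$, whose $x^{-1/2}$ half you only supply for $\tau\gg1$ (via barriers yet to be built) or for $\tau$ small; for intermediate $\tau$ the behaviour of $\rho$ near $x=0$ is precisely what is unknown, so the Sobolev regularity is not established for all $\tau$. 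The paper avoids this entirely: it proves $(\rho-1)^\alpha\rho_x\in L^2(0,1)$ for every $\alpha>0$ by testing \eqref{eq8} with $((\rho-1)^{2\alpha}-\mu)^+$, proves $\rho-1\ge\beta\sin\pi x$ by a comparison argument, and concludes $\rho_x\in L^p$ for $p<2$ by H\"older, uniformly in $\tau$. Note also that $C^{1,1/2}$ cannot be extracted from $w_{xx}\in L^p$ with $p<2$ alone (Morrey gives only $C^{1,1-1/p}$ with $1-1/p<1/2$); the clean fix, essentially the paper's argument, is structural: $w_x+\frac{1}{\tau\rho}=E\in H^1(0,1)\hookrightarrow C^{1/2}[0,1]$ while $\frac{1}{\tau\rho}\in C^{1/2}[0,1]$ because $\rho\in C^{1/2}$ and $\rho\ge1$, hence $w_x\in C^{1/2}$ directly.

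The second gap is that step (iii)(a) is circular. Since $E_x=\rho-b$, $w_x=E-\frac{1}{\tau\rho}$ and $\rho(0)=\rho(1)=1$, one has the identity $\int_0^1(b-\rho)\,dx=w_x(0)-w_x(1)=(E(0)-\tau^{-1})+(\tau^{-1}-E(1))$; therefore the inequality you need, namely $\int_0^1(b-\rho)\,dx>\tau^{-1}-E(1)$, is literally equivalent to the desired conclusion $E(0)>\tau^{-1}$, i.e.\ $w_x(0)>0$. The justification offered for a uniform lower bound $c_0$ (``$\rho\to1$ at both endpoints while $b>1$'') does not produce it: $\rho$ may exceed $b$ in the interior, and in any case the bound must beat $\tau^{-1}-E(1)=-w_x(1)$, which is itself bounded below by a positive constant by Lemma \ref{le1}. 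The paper's Lemma \ref{le2} obtains $w_x(0)>0$ for $\tau\gg1$ by contradiction: if $w_x(0)=0$, then $w_x\le\tau^{-1}$ propagates, a continuation argument forces $w\le\frac12+\tau^{-1}$ and $\rho<1+c$ on all of $[0,1]$, whence $w_x<-\frac{c}{4}$ on $[\frac12,1]$ and $w(1)<\frac12$ for $\tau$ large, contradicting $w\ge\frac12$. You would need either this argument or some independent, non-circular quantitative input to close step (a); the subsequent sub/supersolution construction in (b) is plausible but remains a sketch.
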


\begin{rem}\label{r1}
	
Our first contribution in this article, is to prove the optimal regularities of $\rho$ and $w$. The second contribution is to show the influence of semiconductors effect on the singularity of the solution $\rho$ at sonic points $x=1$ and $x=0$. 

\end{rem}

\begin{rem}\label{r2}
	
It is easy to see that the regularity will be $W^{2,\infty}(0,1)$ and then $C^{1,1}[0,1]$ for the subsonic solution, which is much higher than the case of sonic-subsonic solution.
	
\end{rem}

The remaining part of the present paper is organized as follows. In Section 2, we are devoted to the regularities of $\rho$ and $w$ ultilizing the comparison principle and the energy estimate. In Section 3, by analyzing the properties of solution around the sonic points, we prove the regularities \eqref{eq11}-\eqref{eq12} of solution are optimal. Moreover, we explore the specific influence of the semiconductors effect on the singularity of solution around sonic points $x=1$ and $x=0$.


\section{Regularity of solution}

This section is devoted to studying the regularities of the sonic-subsonic solution $\rho$ to \eqref{eq8} and the solution $w$ to \eqref{eq10}.

{\bf Proof of the first part of Theorem \ref{th1}}.~~First of all, we prove the H\"{o}lder-index of \eqref{eq11} for $\rho$ and $w$. This proof is split into two steps.

Step 1. Preliminary regularity.

Using the weak maximum principle to \eqref{eq8}, we have
\begin{align}\label{eq13}
	1\le \rho \le \bar{b},	
\end{align}
which implies
\begin{align}\label{eq14}
	\frac{1}{2}\le w \le F(\bar{b}).	
\end{align}

Multiplying $\eqref{eq10}_1$ by $w-\frac{1}{2}$ and integrating it by parts over $[0,1]$, and using the Young inequality, we get
\begin{align}\label{eq15}
	\int_{0}^{1}|w_{x}|^2dx&=-\int_{0}^{1}(\rho-b)\cdot\left(w-\frac{1}{2}\right)dx-\frac{1}{\tau}\int_{0}^{1}\frac{1}{\rho}\cdot w_xdx\nonumber\\
	&\le \int_{0}^{1}|\rho-b|\cdot\left|w-\frac{1}{2}\right|dx+\frac{1}{\tau}\int_{0}^{1}\left|\frac{1}{\rho}\right|\cdot|w_x|dx\nonumber\\
	&\le \int_{0}^{1}|\bar{b}|\cdot\left|F(\bar{b})-\frac{1}{2}\right|dx+\frac{1}{\tau}\int_{0}^{1}\left(\frac{\varepsilon}{2}|w_x|^2+\frac{1}{2\varepsilon}\left|\frac{1}{\rho}\right|^2\right)dx,	
\end{align}
where $\varepsilon>0$ is a parameter. Taking $\varepsilon=\tau$, we have
\begin{align}\label{eq16}
	\int_{0}^{1}|w_x|^2dx&\le 2\int_{0}^{1}|\bar{b}|\cdot\left|F(\bar{b})-\frac{1}{2}\right|dx+\frac{1}{\tau^2}\int_{0}^{1}\left|\frac{1}{\rho}\right|^2dx\nonumber\\
	&\le 2\int_{0}^{1}|\bar{b}|\cdot\left|F(\bar{b})-\frac{1}{2}\right|dx+\frac{1}{\tau^2}.	
\end{align}
Therefore, we obtain $w_x\in L^2(0,1)$, namely, $w\in H^1(0,1)$. Since $H^1(0,1)\hookrightarrow C^{\frac{1}{2}}[0,1]$, we get $w\in C^{\frac{1}{2}}[0,1]$.

Step 2. Further regularity.

Since $\rho\in L^2(0,1)$, $b(x)\in L^\infty(0,1)$ and $\left(w_x+\frac{1}{\tau\rho}\right)_x=\rho-b(x)$, we have $\left(w_x+\frac{1}{\tau\rho}\right)_x\in L^\infty(0,1)\subset L^2(0,1)$. Note that $w_x+\frac{1}{\tau\rho}\in L^2(0,1)$, we have $w_x+\frac{1}{\tau\rho}\in H^1(0,1)\hookrightarrow C^\frac{1}{2}[0,1]$. Finally, we derive $w_x\in C^0[0,1]$, which means $w\in C^1[0,1]$.

Since $[(\rho-1)^2]_x=\frac{2\rho^3}{\rho+1}\cdot w_x$, combining $\rho \in C^0[0,1]$ and $\rho+1\neq 0$ for $x\in [0,1]$, we can get $[(\rho-1)^2]_x\in C^0[0,1]$, that is, $(\rho-1)^2\in C^1[0,1]$.

A straightforward calculation gives

\begin{align}\label{eq17}
	\frac{|\rho(x)-\rho(y)|^2}{|x-y|}&=\frac{|(\rho(x)-1)-(\rho(y)-1)| \cdot |(\rho(x)-1)-(\rho(y)-1)|}{|x-y|}\nonumber\\
	&\le \frac{|(\rho(x)-1)-(\rho(y)-1)| \cdot |(\rho(x)-1)+(\rho(y)-1)|}{|x-y|}\nonumber\\
	&=\frac{|[\rho(x)-1]^2-[\rho(y)-1]^2|}{|x-y|}\nonumber\\
	&< \infty,
\end{align}
for any $x, y\in [0,1]$ and $x\ne y$. Thus, we derive $\rho \in C^\frac{1}{2}[0,1]$, which means $w_x \in C^\frac{1}{2}[0,1]$. This, together with $w\in C^1[0,1]$, indicates $w\in C^{1,\frac{1}{2}}[0,1]$. 
 
Next, we prove the Sobolev-index of \eqref{eq12} for $\rho$ and $w$. The proof is divided into three steps.

Step 1. We prove that for any $\alpha>0$, it holds $(\rho-1)^\alpha \rho_x\in L^2(0,1)$.

Multiplying $\eqref{eq8}_1$ by $((\rho-1)^{2\alpha}-\mu)^+={\rm max}\left\{(\rho-1)^{2\alpha}-\mu,0\right\}$ and integrating the resulted equation over $[0,1]$, after defining the set $\Omega_\mu:=\left\{x| (\rho-1)^{2\alpha}\ge\mu\right\}$, where $\mu>0$, we have
\begin{align}\label{eq18}
	&-\int_{\Omega_\mu}\left(\frac{\rho+1}{\rho^3}(\rho-1)\rho_x\right)\cdot\left((\rho-1)^{2\alpha}-\mu\right)_x^+dx-\int_{\Omega_\mu}(\rho-1)\cdot((\rho-1)^{2\alpha}-\mu)^+dx\nonumber\\
	=&\int_{\Omega_\mu}(1-b)\cdot((\rho-1)^{2\alpha}-\mu)^+dx-\int_{\Omega_\mu}\frac{1}{\tau}\left(\frac{1}{\rho}\right)_x\cdot ((\rho-1)^{2\alpha}-\mu)^+dx.
\end{align}
For the sake of simplicity, we mark the above equation as $L_1+L_2=R_1+R_2$. By some
straightforward computations, it follows that
\begin{align}\label{eq19}
	L_1&=-2\alpha \int_{\Omega_\mu}\left(\frac{\rho+1}{\rho^3}(\rho-1)\rho_x\cdot(\rho-1)^{2\alpha-1}\rho_x\right)dx\nonumber\\
	&=-2\alpha \int_{\Omega_\mu}\frac{\rho+1}{\rho^3}(\rho-1)^{2\alpha }|\rho_x|^2dx,
\end{align}
\begin{align}\label{eq20}
	L_2\le 0,
\end{align}
\begin{align}\label{eq21}
	|R_1|=\left|\int_{\Omega_\mu}(1-b)\cdot((\rho-1)^{2\alpha}-\mu)^+\right|<\bar{b}\cdot(\bar{b}-1)^{2\alpha}\cdot|\Omega_\mu|<\infty	
\end{align}
and
\begin{align}\label{eq22}
	R_2&=2\alpha \int_{\Omega_\mu}\frac{1}{\tau}\left(\frac{1}{\rho}(\rho-1)^{2\alpha-1}\rho _x\right)dx\nonumber\\
	&=2\alpha \int_{\Omega_\mu}\frac{1}{\tau}\cdot\left(\frac{1}{1+(\rho-1)}(\rho-1)^{2\alpha-1}(\rho-1)_x\right)dx\nonumber\\
	&=\frac{2\alpha}{\tau}\int_{\Omega_\mu}\left(G(\rho-1)\right)_xdx\nonumber\\
	&=\frac{2\alpha}{\tau}\cdot G(\rho-1)|_{\partial\Omega_\mu}=0,
\end{align}
where $G'(s):=\frac{s^{2\alpha-1}}{1+s}$. Substituting \eqref{eq19}-\eqref{eq22} into \eqref{eq18}, we have
\begin{align}\label{eq23}
	2\alpha \int_{\Omega_\mu}\frac{\rho+1}{\rho^3}(\rho-1)^{2\alpha }|\rho_x|^2dx<\infty.	
\end{align}
According to $C\int_{\Omega_\mu}(\rho-1)^{2\alpha}|\rho_x|^2dx\le\int_{\Omega_\mu}\frac{\rho+1}{\rho^3}(\rho-1)^{2\alpha }|\rho_x|^2dx$, and together with \eqref{eq23}, we have
\begin{align}\label{eq25}
	\int_{\Omega_\mu}(\rho-1)^{2\alpha}|\rho_x|^2dx\le C,	
\end{align}
here and below the constant $C>0$ is independent on $\mu$. Let $\mu\to 0$, we get
\begin{align}\label{eq26}
	\int_{0}^{1}(\rho-1)^{2\alpha}|\rho_x|^2dx\le C,	
\end{align}
which indicates
\begin{align}\label{eq27}
	(\rho-1)^\alpha\rho_x\in L^2(0,1).	
\end{align}

Step 2. We prove that $\rho(x)-1 \ge \beta {\rm sin}\pi x$ holds for $\beta>0$ sufficiently small.

Taking $u=\frac{1}{2}+\gamma {\rm sin}^2\pi x$, where $\gamma>0$ is a constant, we naturally have $u_x=2\gamma \pi {\rm sin}\pi x{\rm cos}\pi x$, $u_{xx}=2\gamma\pi^2{\rm cos}2\pi x$. Now define $f(y):=F^{-1}(y)$, where $y\ge \frac{1}{2}$, then we derive  
\begin{equation}
	\left\{
	\begin{aligned}\label{eq28}
		&u_{xx}-(f(u)-1)+\left(\frac{1}{\tau f(u)}\right)_x=2\gamma\pi^2{\rm cos}2\pi x-(f(u)-1)+\left(\frac{1}{\tau f(u)}\right)_x,\\
		&u(0)=u(1)=\frac{1}{2}.
	\end{aligned}
	\right.
\end{equation}
For equation \eqref{eq28}, it is easy to see that
\begin{align}\label{eq29}
	\left\|2\gamma\pi^2{\rm cos}2\pi x\right\|_{L^\infty(0,1)}\to 0,\,\,\, \mbox{as}\,\,\, \gamma \to 0.	
\end{align}
When $\gamma \to 0$, we have $u(x)=\frac{1}{2}+\gamma {\rm sin}^2\pi x\to\frac{1}{2}$, and since $F(1)=\frac{1}{2}$, we infer that $f(u)\to 1$. So we get
\begin{align}\label{eq30}
	\left\|f(u)-1\right\|_{L^\infty(0,1)}\to 0,\,\,\,\mbox{as}\,\,\,\gamma \to 0.	
\end{align}
Because $1\le s \le \bar{b}$, we have
\begin{align*}
	F'(s)=\frac{1}{s}-\frac{1}{s^3}=\frac{(s+1)(s-1)}{s^3}\ge C(s-1).
\end{align*}
Thus, it follows that
\begin{align}\label{eq31}
	f'(u)=(F^{-1}(u))'=\frac{1}{F'(F^{-1}(u))}\le \frac{1}{C(F^{-1}(u)-1)}.
\end{align}
According to the Taylor expansion around $\rho=1$, we derive 
\begin{align}\label{eq32}
	F(\rho)&=\frac{1}{2}+F'(1)(\rho-1)+\frac{F''(1)}{2}(\rho-1)^2+o(1)(\rho-1)^2\nonumber\\
	&=\frac{1}{2}+(\rho-1)^2+o(1)(\rho-1)^2.
\end{align}
It is easy to obtain that for $\rho$ close to $1$
\begin{align}\label{eq33}
	(\rho-1)^2\ge C\left(F(\rho)-\frac{1}{2}\right).	
\end{align}
For $\rho$ not close to $1$, we also have 
\begin{align}\label{eq34}
	(\rho-1)^2\ge C\left(F(\rho)-\frac{1}{2}\right).
\end{align} 
Combining \eqref{eq33}-\eqref{eq34}, it holds that $\left(\rho-1\right)^2\ge C\left(F\left(\rho\right)-\frac{1}{2}\right)$ for any $x\in[0,1]$. Then we obtain
\begin{align}\label{eq35}
	F^{-1}(u)-1\ge C\sqrt{u-1}.	
\end{align}
This, together with \eqref{eq31} and \eqref{eq35}, indicates
\begin{align}\label{eq36}
	f'(u)\le \frac{C}{\sqrt{u-\frac{1}{2}}}.
\end{align}
Therefore, we get
\begin{align}\label{eq37}
	\left|\left(\frac{1}{\tau f(u)}\right)_x\right|&\le\frac{1}{\tau}\frac{\left|f'(u)\right|\left|u_x\right|}{f^2(u)}\nonumber\\
	&\le \frac{1}{\tau}\cdot\frac{1}{f^2(u)}\left(\frac{C}{\sqrt{u-\frac{1}{2}}}\right)\cdot u_x\nonumber\\
	&\le \frac{1}{\tau}\frac{C}{\sqrt{\gamma{\rm sin}^2\pi x}}\cdot 2\gamma \pi {\rm sin}\pi x{\rm cos}\pi x\nonumber\\
	&=\frac{C\cdot 2\pi}{\tau}\cdot \sqrt{\gamma}{\rm cos}\pi x. 
\end{align}
Then we have
\begin{align}\label{eq38}
	\left\|\left(\frac{1}{\tau f(u)}\right)_x\right\|_{L^\infty(0,1)}\to 0,\,\,\, \mbox{as}\,\,\, \gamma\to 0.	
\end{align}
Substituting \eqref{eq29}-\eqref{eq30} and \eqref{eq38} into \eqref{eq28}, we arrive 
\begin{equation}
	\left\{
	\begin{aligned}\label{eq39}
		&u_{xx}-(f(u)-1)+\left(\frac{1}{\tau f(u)}\right)_x=g(\gamma,x),\\
		&u(0)=u(1)=\frac{1}{2},
	\end{aligned}
	\right.
\end{equation}
where $g(\gamma,x)$ represents the right side term of equation $\eqref{eq28}_1$. From the above analysis, we know that $\left\|g(\gamma,x)\right\|_{L^\infty(0,1)}\to 0$ as $\gamma \to 0$. 

From \eqref{eq10}, $w(x)$ satisfies the following equation
\begin{equation}
	\left\{
	\begin{aligned}\label{eq40}
		&w_{xx}-(f(w)-1)+\left(\frac{1}{\tau f(w)}\right)_x=1-b<0,\\
		&w(0)=w(1)=\frac{1}{2}.
	\end{aligned}
	\right.
\end{equation}

By taking the difference between \eqref{eq39} and \eqref{eq40}, and setting $V=u-w$, it follows that
\begin{equation}
	\left\{
	\begin{aligned}\label{eq41}
		&V_{xx}-(f(u)-f(w))+\left(\frac{1}{\tau f(u)}-\frac{1}{\tau f(w)}\right)_x=g(\gamma,x)-1+b,\\
		&V(0)=V(1)=0.
	\end{aligned}
	\right.
\end{equation}
For the above equation, there must exist $\gamma_0$ small enough such that $g(\gamma_0,x)-1+b\ge 0$. Now we prove that $V\le 0$ for any $x\in [0,1]$. Taking $\varphi=\frac{V^+}{V^++h}$ as the test function according to the comparison principle in \cite[Theorem 10.7]{GT2001}, where $h>0$, then we get $\varphi_x=\frac{h V^+_x}{(V^++h)^2}$ and $\frac{V^+_x}{V^++h}=\left({\rm log}\left(1+\frac{V^+}{h}\right)\right)_x$. Multiplying $\eqref{eq41}_1$ by $\varphi$ and integrating it by parts over $[0,1]$, we derive
\begin{align}\label{eq42}
	&h\int_{0}^{1}\left|\left({\rm log}\left(1+\frac{V^+}{h}\right)\right)_x\right|^2dx+\int_{0}^{1}(f(u)-f(w))\cdot \frac{V^+}{V^++h}dx\nonumber\\
	=&\frac{1}{\tau}\int_{0}^{1}\frac{f(u)-f(w)}{f(u)f(w)}\cdot \frac{h V^+_x}{(V^++h)^2}dx-\int_{0}^{1}(g-1+b)\cdot \frac{V^+}{V^++h}dx. 	
\end{align}
Obviously, $\int_{0}^{1}(g-1+b)\cdot \frac{V^+}{V^++h}dx\ge 0$, and it follows that 
\begin{align}\label{eq43}
	&h\int_{0}^{1}\left|\left({\rm log}\left(1+\frac{V^+}{h}\right)\right)_x\right|^2dx+\int_{0}^{1}(f(u)-f(w))\cdot \frac{V^+}{V^++h}dx\nonumber\\
	\le&\frac{1}{\tau}\int_{0}^{1}\frac{f(u)-f(w)}{f(u)f(w)}\cdot \frac{h V^+_x}{(V^++h)^2}dx\nonumber\\
	\le &\frac{h}{\tau}\int_{0}^{1}\frac{1}{f(u)f(w)}\cdot\left|\frac{f(u)-f(w)}{V^++h}\right|\cdot\left|\frac{V^+_x}{V^++h}\right|dx=:I.
\end{align}
Because $f(u)=f(w)=1$ at $x=0$ and $x=1$, it leads to $\underset{h\to0^+}{{\rm lim}} \left|\frac{f(u)-f(w)}{V^++h}\right|\to+\infty$ around $x=0$ or $x=1$. For this reason, the comparison principle in \cite[Theorem 10.7]{GT2001} can not be directly applied. Now define the set $T:=\left\{x\in[0,1]|\left|\frac{f(u)-f(w)}{V^++h}\right|\le \mathcal{C}\right\}$, where $\mathcal{C}>0$ is to be determined. Applying the Young inequality, we derive
\begin{align}\label{eq44}
	I\le& \frac{h}{\tau}\int_{T}\mathcal{C}\left|\left({\rm log}\left(1+\frac{V^+}{h}\right)\right)_x\right|dx+\frac{h}{\tau}\int_{T^c}\left|\frac{f(u)-f(w)}{V^++h}\right|\cdot\left|\left({\rm log}\left(1+\frac{V^+}{h}\right)\right)_x\right|dx\nonumber\\
	\le&\frac{h\mathcal{C}}{\tau}\int_{0}^{1}\left|\left({\rm log}\left(1+\frac{V^+}{h}\right)\right)_x\right|dx\nonumber\\
	&+\frac{h}{\tau}\int_{T^c}\left[\frac{1}{2\varepsilon}\left|\frac{f(u)-f(w)}{V^++h}\right|^2+\frac{\varepsilon}{2}\left|\left({\rm log}\left(1+\frac{V^+}{h}\right)\right)_x\right|^2\right]dx,	
\end{align}
where $\varepsilon>0$ denotes the parameter, and $T^c=[0,1]/T$. Taking $\varepsilon=\tau$, we get
\begin{align}\label{eq45}
	I&\le\frac{h\mathcal{C}}{\tau}\int_{0}^{1}\left|\left({\rm log}\left(1+\frac{V^+}{h}\right)\right)_x\right|dx+\frac{h}{2}\int_{0}^{1}\left|\left({\rm log}\left(1+\frac{V^+}{h}\right)\right)_x\right|^2dx\nonumber\\
	&\,\,\,\,\,\,\,\,+\frac{1}{2\tau^2}\int_{T^c}h\left|\frac{f(u)-f(w)}{V^++h}\right|^2dx.
\end{align}
By recalling the Taylor expansion in \eqref{eq32} and using the properties of the functions $F$ and $f$, we obtain
\begin{align}\label{eq46}
	h\left|\frac{f(u)-f(w)}{V^++h}\right|^2\le\frac{\left|f(u)-f(w)\right|^2}{V^++h}\le\frac{\left|f(u)-f(w)\right|^2}{V^+}<M,	
\end{align} 
where $M>0$ is independent on $h$. Plugging \eqref{eq45}-\eqref{eq46} into \eqref{eq43}, we have
\begin{align}\label{eq47}
	&h\int_{0}^{1}\left|\left({\rm log}\left(1+\frac{V^+}{h}\right)\right)_x\right|^2dx+2\int_{0}^{1}(f(u)-f(w))\cdot \frac{V^+}{V^++h}dx\nonumber\\
	\le&\frac{2h\mathcal{C}}{\tau}\int_{0}^{1}\left|\left({\rm log}\left(1+\frac{V^+}{h}\right)\right)_x\right|dx+\frac{1}{\tau^2}\int_{T^c}Mdx.
\end{align}

Next we analyze $\int_{0}^{1}(f(u)-f(w))\cdot \frac{V^+}{V^++h}dx$. Suppose that $V^+\not\equiv 0$, based on the monotonicity increasing properties of $F$ and $f$, we have
\begin{align}\label{eq48}
	\underset{h\to0^+}{{\rm lim}}\int_{0}^{1}(f(u)-f(w))\cdot \frac{V^+}{V^++h}dx>C>0.
\end{align}
There exists a constant $\mathcal{C}\gg 1$ such that the measure $|T^c|\ll 1$ satisfying
\begin{align}\label{eq49}
	\frac{1}{\tau^2}\int_{T^c}Mdx\le \int_{0}^{1}(f(u)-f(w))\cdot \frac{V^+}{V^++h}dx. 	
\end{align}
Combining \eqref{eq47}-\eqref{eq49}, we obtain
\begin{align}\label{eq50}
	\int_{0}^{1}\left|\left({\rm log}\left(1+\frac{V^+}{h}\right)\right)_x\right|^2dx\le\frac{2\mathcal{C}}{\tau}\int_{0}^{1}\left|\left({\rm log}\left(1+\frac{V^+}{h}\right)\right)_x\right|dx.	
\end{align}
Let $C=\frac{2\mathcal{C}}{\tau}$, and by the H\"{o}lder inequality, we have
\begin{align}\label{eq51}
	\left\|\left({\rm log}\left(1+\frac{V^+}{h}\right)\right)_x\right\|_{L^2(0,1)}^2\le C \left\|\left({\rm log}\left(1+\frac{V^+}{h}\right)\right)_x\right\|_{L^2(0,1)},\,\,\, \mbox{as} \,\,\, h\to 0^+,	
\end{align}
which gives
\begin{align}\label{eq52}
	\left\|\left({\rm log}\left(1+\frac{V^+}{h}\right)\right)_x\right\|_{L^2(0,1)}\le C,\,\,\, \mbox{as} \,\,\, h\to 0^+.	
\end{align}
Using the Poincar\'e inequality, we have
\begin{align}\label{eq53}
	\left\|\left({\rm log}\left(1+\frac{V^+}{h}\right)\right)\right\|_{L^2(0,1)}\le C,\,\,\, \mbox{as} \,\,\, h\to 0^+.	
\end{align}
So we get $V^+\equiv 0$, which means $u\le w$.

Step 3. We prove that $\rho_x\in L^p(0,1)$ for any $p<2$.

We know that $\rho\ge 1+\beta{\rm sin}\pi x$ by step 2, i.e., $\rho-1>0$ for any $x\in (0,1)$, then we have
\begin{align}\label{eq54}
	|\rho_x|=\left|\frac{(\rho-1)^\alpha\rho_x}{(\rho-1)^\alpha}\right|=\frac{|(\rho-1)^\alpha\rho_x|}{|\rho-1|^\alpha}.
\end{align}
By step 1, we know $(\rho-1)^\alpha \rho_x\in L^2(0,1)$. And according to the conclusion of step 2, we naturally get $\frac{1}{(\rho-1)^\alpha}\le \frac{1}{C({\rm sin}\pi x)^\alpha}$. Therefore, for any $q<\infty$, we can take $\alpha\ll 1$ such that $\frac{1}{(\rho-1)^\alpha}\in L^q(0,1)$. By the H\"{o}lder inequality, we obtain $\rho_x\in L^p(0,1)$ for any $p<2$, and thus $\rho \in W^{1,p}(0,1)$ for any $p<2$.

By $\eqref{eq2}_1$, we derive $w \in W^{2,p}(0,1)$ for any $p<2$. \hfill $\Box$ 


\section{Optimal regularity}

In this section, we prove that the regularities of $\rho$ and $w$ proved in Section 2 are optimal. Firstly, we prove that $\rho$ has singularity at sonic point $x=1$. This casues the regularity of $\rho$ is optimal, that is, $\rho\notin C^\nu[0,1]$ for any $\nu>\frac{1}{2}$, and $\rho\notin W^{1,\kappa}(0,1)$ for any $\kappa\ge 2$.
\begin{lem}\label{le1}
For any relaxation time $\tau>0$, the first derivative of the solution $w$ to equation \eqref{eq10} satisfies $w_x(1)<0$.
\end{lem}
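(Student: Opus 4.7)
The plan is to reduce the claim to a direct asymptotic computation at $x=1$ by combining the chain rule with the two-sided bounds \eqref{y3} from Lemma \ref{le4}. Since $w=F(\rho)=\ln\rho+\frac{1}{2\rho^2}$, I first differentiate to obtain
\begin{align*}
w_x = F'(\rho)\rho_x = \Bigl(\frac{1}{\rho}-\frac{1}{\rho^3}\Bigr)\rho_x = \frac{(\rho-1)(\rho+1)}{\rho^3}\,\rho_x,
\end{align*}
which isolates the role of the sonic degeneracy: the vanishing factor $\rho-1$ and the blowing-up factor $\rho_x$ appear multiplicatively.

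Next I would read off the behavior of each piece near $x=1$ from \eqref{y3}. The factor $(\rho+1)/\rho^3$ is bounded and tends to $2$ as $\rho\to 1^-$. For the product $(\rho-1)\rho_x$, multiplying the lower bound $\rho-1\ge B_1(1-x)^{1/2}$ by the (negative) upper bound $\rho_x\le -B_4(1-x)^{-1/2}$ gives the one-sided estimate
\begin{align*}
(\rho-1)\rho_x \le -B_1 B_4 < 0 \qquad \text{for } x \text{ near } 1,
\end{align*}
and a matching lower bound $(\rho-1)\rho_x \ge -B_2 B_3$ follows analogously. Consequently, for $x$ in a left neighborhood of $1$,
\begin{align*}
w_x(x) \le \frac{(\rho(x)+1)}{\rho(x)^3}\,(-B_1 B_4) \le -C_0 < 0
\end{align*}
for some constant $C_0>0$ (any number slightly smaller than $2 B_1 B_4$ will do once $x$ is close enough to $1$).

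Finally, I would invoke the regularity $w\in C^1[0,1]$ already established in the first part of Theorem \ref{th1} (Section 2, Step 2 of the H\"older argument). This ensures that $w_x$ is continuous up to the boundary, so the left limit $\lim_{x\to 1^-} w_x(x)=w_x(1)$ exists and inherits the uniform bound above, giving $w_x(1)\le -C_0<0$.

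There is no real obstacle here: the only minor care needed is to verify that \eqref{y3} gives two-sided bounds on both $\rho-1$ and $\rho_x$ so that the sign of the limit of $(\rho-1)\rho_x$ is strictly negative (not merely nonpositive). This is exactly what Lemma \ref{le4} provides, and no additional ingredient beyond the already-proved $C^1$-regularity of $w$ is required.
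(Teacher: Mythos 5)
Your argument is internally consistent and, taken at face value, does prove $w_x(1)<0$: the chain rule identity $w_x=\frac{(\rho-1)(\rho+1)}{\rho^3}\rho_x$ is correct, the product of the lower bound on $\rho-1$ with the negative upper bound on $\rho_x$ from \eqref{y3} does yield $(\rho-1)\rho_x\le -B_1B_4$, and the continuity of $w_x$ up to $x=1$ (from the $C^{1,\frac{1}{2}}$ regularity of Section 2) lets you pass to the limit. However, this is a genuinely different route from the paper, and the difference matters for the logic of Section 3. You import the full two-sided asymptotics \eqref{y3}, which is quoted from \cite{LMZZ2017} inside Lemma \ref{le4}; but \eqref{y3} already \emph{is} the singularity structure that Lemma \ref{le1} and the ensuing computations \eqref{eq56}--\eqref{eq61} are designed to re-derive (note that \eqref{eq58} reproduces the first line of \eqref{y3}, and the non-membership statements $\rho\notin C^{\nu}$, $\rho\notin W^{1,\kappa}$ would follow from \eqref{y3} in one line with no need for Lemma \ref{le1} at all). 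The paper instead gives a self-contained contradiction argument using only ingredients proved in Section 2: assuming $w_x(1)=0$, the strict subsonicity $b\ge 1+2c$ together with $\rho\le 1+c$ near $x=1$ gives $\rho-b\le -c$ there, and the Newton--Leibniz formula plus $\frac{1}{\tau}-\frac{1}{\tau\rho}\ge 0$ forces $w_x>0$ on $[1-\delta,1)$, so $w$ would increase up to its boundary minimum $w(1)=\frac{1}{2}$, a contradiction. Your proof buys brevity at the cost of leaning on the cited asymptotics; the paper's proof buys independence from \eqref{y3}, which is what entitles it to present the optimality results as a new, self-contained derivation. If you wish to keep your approach, you should say explicitly that you are taking \eqref{y3} as a black box from \cite{LMZZ2017}; otherwise, replace the appeal to \eqref{y3} with an argument that uses only the $C^{1,\frac12}$ regularity, the boundary condition $w(1)=\tfrac12$ with $w\ge\tfrac12$, and the hypothesis $\underline{b}>1$.
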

\begin{proof}
Since $w\in C^{1,\frac{1}{2}}(0,1)$, $w\ge \frac{1}{2}$ for $x\in[0,1]$ and $w(0)=w(1)=\frac{1}{2}$, we know $w_x(1)\le 0$. We prove this lemma by contradiction. Assume that $w_x(1)=0$. Due to $b>1$, we can suppose $b\ge 1+2c$, where $c>0$. Moreover, according to $\rho\in C^\frac{1}{2}(0,1)$ and $\rho(1)=1$, there exists $0<\delta<1$ such that $\rho\le 1+c$ for $x\in [1-\delta,1]$. This means $\rho-b\le-c$ for $x\in[1-\delta,1]$. By the Newton-Leibniz formula, we have 
\begin{align}\label{eq55}
w_x(x)&=w_x(1)-\int_{x}^{1}w_{xx}(s)ds\nonumber\\
&=-\int_{x}^{1}w_{xx}(s)ds\nonumber\\
&=-\int_{x}^{1}\left(\rho-b-\left(\frac{1}{\tau \rho}\right)_x\right)(s)ds\nonumber\\
&>c(1-x)+\frac{1}{\tau}-\frac{1}{\tau \rho(x)}\nonumber\\
&>0
\end{align}
for any $x\in[1-\delta,1]$. Thus, $w$ is strictly monotonically increasing over $[1-\delta,1]$, which contradicts to the fact that $w(x)\ge \frac{1}{2}$ over $[0,1]$. Then, it holds that $w_x(1)<0$. Therefore, we complete the proof of Lemma \ref{le1}.	
\end{proof}
Now, we are ready to prove the rest parts of Theorem \ref{th1}.

{\bf Proof of the second and third parts of Theorem \ref{th1}}.~~The function $w_x$ is continuous due to $w\in C^{1,\frac{1}{2}}(0,1)$. Because $w_x(1)<0$ by Lemma \ref{le1}, there must exist three constants $\delta_1$, $C_1$, $C_2$ with $0<\delta_1<1$ and $C_2>C_1>0$, such that $-C_2\le w_x(x)\le -C_1$ over $[1-\delta_1,1]$. Thus 
\begin{align}\label{eq56}
	C_1(1-x)\le w-\frac{1}{2}\le C_2(1-x)
\end{align}
for any $x\in [1-\delta_1,1]$. This indicates the function $w-\frac{1}{2}$ is between the functions $y=C_1(1-x)$ and $y=C_2(1-x)$ for any $x\in[1-\delta_1,1]$.

By \eqref{eq32} and \eqref{eq56}, we obtain
\begin{align}\label{eq57}
	k_1(\rho-1)^2\le w(\rho)-\frac{1}{2}\le k_2(\rho-1)^2,
\end{align} 
where $k_1, k_2$ denote two positive constants. This, together with  \eqref{eq56}-\eqref{eq57}, gives
\begin{align}\label{eq58}
	h_1(1-x)^\frac{1}{2}\triangleq\sqrt{\frac{C_2}{k_2}}(1-x)^\frac{1}{2}\le \rho-1\le \sqrt{\frac{C_1}{k_1}}(1-x)^\frac{1}{2}\triangleq h_2(1-x)^\frac{1}{2}
\end{align}
for any $x\in[1-\delta_1,1]$. This implies the function $\rho-1$ lies between the functions $y=h_1(1-x)^\frac{1}{2}$ and $y=h_2(1-x)^\frac{1}{2}$ for any $x\in[1-\delta_1,1]$.

Thus, for any $\nu>\frac{1}{2}$, we have
\begin{align}\label{eq59}
	\left|\frac{\rho(1)-\rho(x)}{(1-x)^\nu}\right|\ge\frac{h_1(1-x)^\frac{1}{2}}{(1-x)^\nu}=\frac{h_1}{(1-x)^{\nu-\frac{1}{2}}}\to\infty,\,\,\, \mbox{as}\,\,\, x\to 1.	
\end{align} 
This shows $\rho \notin C^\nu[0,1]$ for any $\nu>\frac{1}{2}$.

Moreover, according to $\rho_x=\frac{\rho^3}{\rho+1}\cdot\frac{w_x}{\rho-1}$ and $-C_2\le w_x(x)\le -C_1$ over $[1-\delta_1,1]$, we get
\begin{align}\label{eq60}
	\left|\rho_x\right|=\left|\frac{\rho^3}{\rho+1}\right|\cdot\left|\frac{w_x}{\rho-1}\right|\ge\frac{1}{\bar{b}+1}\cdot\frac{C_1}{\rho-1}.
\end{align}
Substituing \eqref{eq58} into \eqref{eq60}, we derive
\begin{align}\label{eq61}
	\left|\rho_x\right|\ge\frac{C_1}{h_2(\bar{b}+1)}\cdot\frac{1}{(1-x)^\frac{1}{2}}\notin L^2[1-\delta_1,1],	
\end{align}
and thus $\rho\notin W^{1,\kappa}(0,1)$ for any $\kappa\ge2$. \hfill $\Box$

The above results indicate that for any relaxation time $\tau>0$, the sonic-subsonic solution $\rho$ has strong singularity at sonic point $x=1$. This leads to that the regularities of $\rho$ and $w$ proved in Section 2 are optimal. 

Next, we focus on the singularity of $\rho$ at sonic point $x=0$. In 2017, Li-Mei-Zhang-Zhang showed that there is no singularity when the relaxation time $\tau\ll 1$, see \cite[Theorem 2.4]{LMZZ2017}. However, we can prove that there is still strong singularity to $\rho$ at sonic point $x=0$ when the relaxation time $\tau\gg 1$. To this end, we establish the following lemma.
\begin{lem}\label{le2}
For the relaxation time $\tau\gg 1$, the first derivative of the solution $w$ to equation \eqref{eq10} satisfies $w_x(0)>0$.
\end{lem}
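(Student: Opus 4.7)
My plan is to mimic the contradiction argument of Lemma \ref{le1}, but to integrate the equation twice instead of once, since the two source terms at $x=0$ no longer have cooperating signs. Because $w\in C^{1,\frac{1}{2}}[0,1]$, $w\ge\tfrac12$ on $[0,1]$, and $w(0)=\tfrac12$, we automatically have $w_x(0)\ge 0$, so it suffices to rule out the case $w_x(0)=0$.

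Suppose, for contradiction, that $w_x(0)=0$. I would first fix $c>0$ with $\underline{b}\ge 1+2c$, and by continuity of $\rho$ together with $\rho(0)=1$ choose $\delta\in(0,1)$ so that $\rho(x)\le 1+c$ and hence $(\rho-b)(x)\le -c$ on $[0,\delta]$. A single integration of $\eqref{eq10}_1$ from $0$ to $x$, using $\rho(0)=1$, gives
\begin{equation*}
w_x(x)=\int_0^x(\rho-b)(s)\,ds+\frac{\rho(x)-1}{\tau\rho(x)},\qquad x\in[0,\delta].
\end{equation*}
Unlike the analogous identity near $x=1$ in Lemma \ref{le1}, the Poisson piece is now negative while the relaxation piece is nonnegative, so this single integration cannot decide the sign of $w_x$. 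Integrating once more on $[0,\delta]$, applying Fubini to the resulting double integral, and estimating the relaxation piece crudely by $\rho\le\bar{b}$ (from Lemma \ref{le4}), I would arrive at
\begin{equation*}
w(\delta)-\tfrac12=\int_0^\delta(\delta-t)(\rho-b)(t)\,dt+\int_0^\delta\frac{\rho(s)-1}{\tau\rho(s)}\,ds\le -\frac{c\delta^2}{2}+\frac{(\bar{b}-1)\delta}{\tau}.
\end{equation*}
As soon as $\tau>2(\bar{b}-1)/(c\delta)$, the right-hand side is strictly negative, which contradicts $w\ge\tfrac12$ on $[0,1]$; hence $w_x(0)\neq 0$, and together with $w_x(0)\ge 0$ this gives $w_x(0)>0$ for all sufficiently large $\tau$.

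The main obstacle, not present in Lemma \ref{le1}, is exactly this sign conflict between the integrated doping deficit and the relaxation contribution: near $x=0$ the term $\tfrac{\rho-1}{\tau\rho}$ pushes $w_x$ upward, while $\int_0^x(\rho-b)\,ds$ pushes it downward. The resolution is dimensional: after one extra integration the doping contribution gains an additional factor of $x$ and becomes $O(x^2)$, whereas the relaxation contribution remains $O(x/\tau)$, so the quadratic beats the linear for $\tau$ large enough. This also matches the known absence of singularity at $x=0$ in the small-$\tau$ regime of \cite[Theorem 2.4]{LMZZ2017}. A secondary point to monitor is that $\delta$ depends on $\tau$ via $\rho$; however, the $\tau$-uniform bound $\rho\le\bar{b}$ of Lemma \ref{le4} and the bound \eqref{eq16} on $\|w_x\|_{L^2}$ keep the relevant constants under control for $\tau\gg 1$, so a $\delta$ with the needed property can be chosen uniformly in that regime.
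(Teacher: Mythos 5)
Your argument is correct, and it takes a genuinely different route from the paper's. Both proofs start from the same contradiction hypothesis $w_x(0)=0$ and the same identity $w_x(x)=\int_0^x(\rho-b)\,ds+\frac{\rho(x)-1}{\tau\rho(x)}$, but the paper does not integrate a second time: it keeps only the crude consequence $w_x\le\frac{1}{\tau}$, deduces $w\le\frac{1}{2}+\frac{1}{\tau}$ and hence $\rho\le 1+\frac{c}{2}$ on the initial interval, and then runs a continuation argument to propagate $\rho<1+c$ (hence $\rho-b\le-c$) to all of $[0,1]$, obtaining the contradiction at the far end, $w(1)<\frac{1}{2}$, for $\tau\ge\max\{\tau_0,\frac{8}{c}\}$ with $\tau_0$ depending only on $c$. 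The point of that detour is precisely to make the final threshold on $\tau$ independent of the length of the initial interval. Your double integration reaches the contradiction locally, $w(\delta)<\frac{1}{2}$, which is shorter and avoids the continuation step, but your threshold $\tau>2(\bar b-1)/(c\delta)$ involves $\delta$, which a priori depends on $\tau$ through $\rho$. You flag this correctly, and the gap closes exactly as you indicate: \eqref{eq16} gives $\|w_x\|_{L^2(0,1)}^2\le 2\bar b\,|F(\bar b)-\tfrac12|+1$ uniformly for $\tau\ge1$, hence $w(x)-\tfrac12\le K x^{1/2}$ with $K$ independent of $\tau$; since $F'(s)=\frac{(s-1)(s+1)}{s^3}\ge 2\bar b^{-3}(s-1)$ for $1\le s\le\bar b$, integration gives $F(\rho)-\tfrac12\ge\bar b^{-3}(\rho-1)^2$, so $\rho(x)-1\le(\bar b^{3}K)^{1/2}x^{1/4}$ and $\delta$ can be fixed depending only on $c$, $\bar b$ for all $\tau\ge 1$. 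With that one estimate written out, your proof is complete, and it displays the underlying mechanism (the quadratic doping deficit $-c\delta^2/2$ beating the linear relaxation contribution $(\bar b-1)\delta/\tau$) more transparently than the continuation argument does.
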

\begin{proof}
Since $w\in C^{1,\frac{1}{2}}(0,1)$, $w\ge\frac{1}{2}$ for $x\in[0,1]$ and $w(0)=w(1)=\frac{1}{2}$, we know $w_x(0)\ge 0$. We prove this lemma also by contradiction. Suppose $w_x(0)=0$. Since $b>1$, we can assume $b\ge 1+2c$, where $c>0$. Furthermore, because $\rho \in C^\frac{1}{2}(0,1)$ and $\rho(0)=1$, there is $0<\eta<1$ such that $\rho\le 1+c$ for $x\in[0,\xi]$. This implies $\rho-b\le-c$ for $x\in[0,\xi]$. Using the Newton-Leibniz formula, for any $x\in[0,\xi]$, we derive
\begin{align}\label{eq62}
w_x(x)&=w_x(0)+\int_{0}^{x}w_{xx}(s)ds\nonumber\\
&=\int_{0}^{x}w_{xx}(s)ds\nonumber\\
&=\int_{0}^{x}\left(\rho-b-\left(\frac{1}{\tau \rho}\right)_x\right)(s)ds\nonumber\\
&\le-cx-\frac{1}{\tau \rho(x)}+\frac{1}{\tau}\nonumber\\
&\le \frac{1}{\tau}.
\end{align}

Applying the Newton-Leibniz formula again for any $x\in[0,\xi]$, we have 
\begin{align}\label{eq63}
w(x)\le w(0)+\int_{0}^{x}\frac{1}{\tau}ds\le \frac{1}{2}+\frac{\eta}{\tau}\le \frac{1}{2}+\frac{1}{\tau}.	
\end{align}
In view of \eqref{eq63}, taking $\tau_0>0$ such that $\frac{1}{2}+\frac{1}{\tau_0}=F\left(1+\frac{c}{2}\right)$. When $\tau\ge\tau_0$, we have for any $x\in[0,\xi]$
\begin{align}\label{eq64}
F(\rho)=w(x)\le \frac{1}{2}+\frac{1}{\tau}\le \frac{1}{2}+\frac{1}{\tau_0}=F\left(1+\frac{c}{2}\right).
\end{align}
So we have $\rho\le1+\frac{c}{2}$ for any $x\in[0,\xi]$. 

By continuous extension method, we can ultimately obtain that $w\le \frac{1}{2}+\frac{1}{\tau}$ and $\rho<1+c$ for all $x\in[0,1]$. Obviously, $w\left(\frac{1}{2}\right)\le \frac{1}{2}+\frac{1}{\tau}$. For any $y\in[\frac{1}{2},1]$, by the Newton-Leibniz formula, we have
\begin{align}\label{eq65}
w_x(y)&=w_x(0)+\int_{0}^{y}w_{xx}(s)ds\nonumber\\
&=\int_{0}^{y}\left(\rho-b-\left(\frac{1}{\tau \rho}\right)_x\right)(s)ds\nonumber\\
&<-\frac{c}{2}+\frac{1}{\tau}.	
\end{align}
Taking $\tau\ge{\rm max}\left\{\tau_0,\frac{4}{c}\right\}$, we derive $w_x(y)<-\frac{c}{4}$, and thus
\begin{align}\label{eq66}
w(1)&=w(\frac{1}{2})+\int_{\frac{1}{2}}^{1}w_{x}(s)ds\nonumber\\
&<\frac{1}{2}+\frac{1}{\tau}+\int_{\frac{1}{2}}^{1}-\frac{c}{4}ds\nonumber\\
&<\frac{1}{2}+\frac{1}{\tau}-\frac{c}{8}.	
\end{align}
Let us take $\tau\ge{\rm max}\left\{\tau_0,\frac{8}{c}\right\}$ again, which together with \eqref{eq66} implies $w(1)<\frac{1}{2}$. This is contradictory to the fact that $w\ge \frac{1}{2}$ over $[0,1]$. Then we have $w_x(0)>0$. The proof of Lemma \ref{le2} is completed. 
\end{proof}

Next, we repeat the process of proof at sonic point $x=1$, and ultimately obtain that the sonic-subsonic solution $\rho$ still exists strong singularity at sonic point $x=0$ when the relaxation time $\tau\gg 1$.


\section*{Acknowledgements}
The research of M.~Mei was supported by NSERC grant RGPIN 2022--03374, the research of K.~Zhang was supported by National Natural Science Foundation of China grant 12271087, and the research of G.~Zhang was supported by National Natural Science Foundation of China grant 11871012.


\appendix

{\small
}

\end{document}